\numberwithin{equation}{subsection}
\newtheorem{theorem}{Theorem}
\numberwithin{theorem}{section}
\newtheorem{lemma}[theorem]{Lemma}
\theoremstyle{definition}
\newtheorem{defn}[theorem]{Definition}
\theoremstyle{remark}
\newtheorem{rem}[theorem]{Remark}
\newtheorem{example}[theorem]{Example}
\newcommand{\bC}{\mathbb{C}}
\newcommand{\bF}{\mathbb{F}}
\newcommand{\bN}{\mathbb{N}}
\newcommand{\bP}{\mathbb{P}}
\newcommand{\bQ}{\mathbb{Q}}
\newcommand{\bR}{\mathbb{R}}
\newcommand{\bZ}{\mathbb{Z}}
\newcommand{\cA}{\mathcal{A}}
\newcommand{\cC}{\mathcal{C}}
\newcommand{\cE}{\mathcal{E}}
\newcommand{\cH}{\mathcal{H}}
\newcommand{\cO}{\mathcal{O}}
\newcommand{\cP}{\mathcal{P}}
\newcommand{\cV}{\mathcal{V}}
\newcommand{\cX}{\mathcal{X}}
\newcommand{\fp}{\mathfrak{p}}
\newcommand{\ul}[1]{\underline{\smash{#1}}}
\DeclareMathOperator{\PGL}{PGL}
\DeclareMathOperator{\PSL}{PSL}
\DeclareMathOperator{\Nm}{Nm}
\DeclareMathOperator{\trd}{trd}
\DeclareMathOperator{\nrd}{nrd}
\DeclareMathOperator{\End}{End}
\DeclareMathOperator{\Hom}{Hom}
\DeclareMathOperator{\Spec}{Spec}
\DeclareMathOperator{\sgn}{sgn}
\newcommand{\twocase}[5]{#1 \begin{cases} #2 & \text{{\rm #3}}\\ #4 &\text{{\rm #5}} \end{cases}}
\begin{document}
    \title[]{Superspecial primes for QM abelian surfaces over real number fields}

\author {Fangu Chen\\
\texttt{\lowercase{fangu@berkeley.edu}}}

\begin{abstract}
    Baba and Granath generalize Elkies' theorem on infinitude of supersingular primes for elliptic curves to abelian surfaces with quaternionic multiplication of discriminant $6$, whose field of moduli is $\bQ$ and which is a Jacobian in characteristic $2$ and $3$. We extend the field of moduli to any number field with a real embedding, and weaken the local conditions at $2$ and $3$. The proof relies on the intersection theory of Heegner divisors on Shimura curves. 
\end{abstract}

\maketitle

\section{Introduction} 

\subsection{Background}

Elkies proved in \cite{MR903384} that an elliptic curve defined over a number field of odd degree has infinitely many primes of supersingular reduction, and extended the result in \cite{MR1030140} to elliptic curves over any number field with at least one real embedding. This has been generalized to various families of abelian varieties such as abelian surfaces with multiplication by the quaternion algebra of discriminant $6$, with field of moduli $\bQ$ and which is a Jacobian in characteristic $2$ and $3$ in \cite{MR2414789}. For a detailed discussion of related works, see \cite{chen2025infinitelysupersingularprimesmumfords}.

\subsection{The main result}
We extend the result of \cite{MR2414789} to number fields with at least one real embedding, and weaken the local conditions at the bad reduction primes $2$ and $3$. We expect that similar methods can be applied to the settings in \cites{MR2704678, MR2705896, li2025infinitelyprimesbasicreduction,chen2025infinitelysupersingularprimesmumfords}.

A special case of our main theorem is as follows. 
\begin{theorem}\label{thm_specialcase}
    Let $C$ be a genus $2$ curve with Jacobian that has multiplication by the maximal quaternion order with discriminant $6$, 
    and has field of moduli a number field $L$ with at least one real embedding. 
    Assume $C$ has potentially smooth stable reduction at primes above $2$ and $3$. Then its Jacobian has superspecial reduction at infinitely many primes. 
\end{theorem}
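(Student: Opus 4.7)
The strategy is to phrase the problem in terms of arithmetic intersection theory on the Shimura curve $X = X_6$ parametrizing abelian surfaces with multiplication by the maximal order $\cO$ of the indefinite quaternion algebra of discriminant $6$. Since $L$ is the field of moduli, $\mathrm{Jac}(C)$ corresponds to a point $P \in X(L)$, which I would extend to a horizontal section of a suitable integral model $\cX$ over $\Spec \bZ$. For each positive integer $m$, I would use the Heegner divisor $T_m$ on $\cX$ whose complex points parametrize QM surfaces admitting an optimal embedding of a quadratic order of discriminant $-m$. A key input is the Deuring-style characterization: at a prime $\fp$ of residue characteristic $p \geq 5$, the local intersection $(P \cdot T_m)_\fp$ is positive precisely when the reduction $P \bmod \fp$ admits an optimal embedding of such an order, and this forces superspecial reduction whenever $m$ is chosen so that $p$ is non-split in $\bQ(\sqrt{-m})$.

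The next step is to extract superspecial primes from the global arithmetic intersection number
\[
\widehat{P \cdot T_m} \;=\; (P \cdot T_m)_\infty \;+\; \sum_{\fp} (P \cdot T_m)_\fp \log N\fp,
\]
by combining upper and lower bounds on its two parts. The archimedean contribution is controlled using the real embedding of $L$: by complex conjugation, the image of $P$ lies on the real locus $X(\bR) \subset X(\bC)$, along which the Heegner divisors $T_m$ accumulate. Following the Green function estimates of Baba--Granath in the case $L = \bQ$ (analogous to Elkies' archimedean estimate for elliptic curves), I expect a lower bound on $(P \cdot T_m)_\infty$ growing along a suitable infinite subsequence of $m$. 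If only finitely many $\fp$ gave superspecial reduction, then the local multiplicities in the finite sum could be bounded uniformly in $m$, contradicting the archimedean growth and yielding the desired infinitude.

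The main obstacle is controlling the local intersection $(P \cdot T_m)_\fp$ at primes $\fp$ above $2$ and $3$, where $\cX$ has bad reduction and the direct Baba--Granath analysis breaks down. Here the hypothesis that $C$ has potentially smooth stable reduction at these primes enters: after a finite extension of $L$, the Jacobian of $C$ extends to an abelian scheme with $\cO$-action over the ring of integers, so $P$ factors through a smooth integral model of $\cX$ at $\fp$. This should give a uniform bound on $(P \cdot T_m)_\fp$ for $\fp$ above $2$ and $3$, independent of $m$, allowing the ``Jacobian in characteristic $2$ and $3$'' hypothesis of \cite{MR2414789} to be replaced by the weaker potentially-smooth-stable condition. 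The most delicate point will be verifying that the Heegner divisors $T_m$ behave well under this base change and that the Green function asymptotics survive the passage from $L = \bQ$ to a general totally real or mixed-signature field $L$; once these two technical inputs are in place, balancing the archimedean and finite contributions completes the argument.
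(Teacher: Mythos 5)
Your proposal replaces the paper's argument with an Arakelov-style balancing of archimedean against finite contributions, and as sketched it has genuine gaps. The most basic one is logical: a positive local intersection $(P\cdot T_m)_{\fp}$ does \emph{not} force superspecial reduction at $\fp$ --- an ordinary reduction whose quaternionic endomorphism ring is an order in an imaginary quadratic field split at $p$ also meets $T_m$ for suitable $m$. You correctly state the Deuring-type criterion (one needs $p$ non-split in $\bQ(\sqrt{-m})$ \emph{at the same prime} where the intersection is positive), but your balancing argument only produces \emph{some} prime of positive intersection, with no control over which one, so you cannot impose the non-split condition there. The entire content of the Elkies/Baba--Granath method, which this paper follows, is a quadratic-reciprocity mechanism that delivers both conditions simultaneously: one shows $\left(\frac{D}{N}\right)=-1$ for $N$ an integer essentially equal to $|\Nm_{L/\bQ}(P_D(j_0))|$, which forces a prime divisor $p$ of $N$ (hence $v_p(\Nm P_D(j_0))>0$) with $\left(\frac{D}{p}\right)=-1$. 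The real embedding of $L$ enters only to control the \emph{sign} of $\Nm_{L/\bQ}(P_D(j_0))$, by placing one real root of $P_D$ adjacent to a real conjugate of $j_0$; there is no Green-function growth estimate in Baba--Granath to appeal to. Your second analytic assertion --- that if only finitely many primes were superspecial then the finite local terms would be ``bounded uniformly in $m$'' --- is also unjustified and is in fact the hard part of any such balancing argument: at a fixed superspecial prime, $(P\cdot T_m)_{\fp}$ grows with $m$ (it counts weighted embeddings of $\cO_{-m}$ into a definite quaternion order), and controlling this on average is a substantial lattice-point problem, not a triviality.

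The treatment of $p=2,3$ is also off target. These primes divide the discriminant of $B$, so the Shimura curve has \v{C}erednik--Drinfeld (totally degenerate) reduction there and the reductions are never superspecial in the relevant sense; the issue is not to bound $(P\cdot T_m)_{\fp}$ but to exclude these primes from the count. Potentially smooth stable reduction does not give a uniform bound on intersection numbers at $\fp\mid 6$; what it actually buys (via \cite{MR2414789}*{Proposition 2}) is $v_{\fp}(j(C))=0$, and the paper combines this with an intersection computation on the integral model (\Cref{lem:avoidintersection}, \Cref{lem:red23}) showing that the chosen Heegner cycles reduce into $\{j=0,\infty\}$ modulo $2$ and $3$. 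The conclusion is that $j_0$ avoids the Heegner cycle at these primes, so that the integer $N$ above is coprime to $6$ --- a coprimality needed for the Jacobi symbol computation, not a boundedness statement. To repair your write-up you would either have to supply the two missing analytic inputs (archimedean growth and average bounds at fixed finite primes, in the style of Charles or Shankar--Tang) together with a mechanism for the non-splitness condition, or adopt the reciprocity argument the paper actually uses.
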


More generally, the coarse moduli variety of principally polarized abelian surfaces with potential multiplication by the maximal quaternion order of discriminant $6$ is isomorphic to $\bP^1$, and an arithmetic $j$-function (see \eqref{eq:j} in \cref{sec:coord}) is defined in \cite{MR2423455}. In terms of this coordinate, the assumption at primes $\fp$ above $2$ and $3$ in \Cref{thm_specialcase} corresponds to the case $v_{\fp}(j(C)) = 0$ by \cite{MR2414789}*{Proposition 2}, and our main theorem can be stated as follows. 

\begin{theorem}\label{thm_main}
Let $C$ be a genus $2$ curve with Jacobian that has multiplication by the maximal quaternion order with discriminant $6$, 
    and has field of moduli a number field $L$ with at least one real embedding. 
    Write $j_0:=j(C)\in L$ and $\Nm_{L/\bQ}(j_0) = \frac{n}{d}$ with $n,d\in \bZ, (n,d,6) = 1, d>0$. 
    Assume at least one of the following conditions:
    \begin{enumerate}
        \item $v_{\mathfrak{p}}(j_0) \leq 0$ for $\mathfrak{p}$ above $2,3$, and $[L:\bQ]+v_3(d\Nm_{L/\bQ}(27j_0+16))$ is odd or $j_0$ has at least one real conjugate in $(-\frac{16}{27},0)$;
        \item $v_{\mathfrak{p}}(j_0) \geq 0$ for $\mathfrak{p}$ above $2$, $v_{\mathfrak{p}}(j_0) \leq 0$ for $\mathfrak{p}$ above $3$, and $j_0$ has at least one real conjugate in $(0, \infty)$;
        \item $[L:\bQ]$ is even, $v_{\mathfrak{p}}(j_0) \geq 0$ for $\mathfrak{p}$ above $2,3$, $j_0$ has at least one real conjugate in $(-\infty, -\frac{16}{27})\cup (0, \infty)$.
    \end{enumerate} 
    Then the Jacobian of $C$ has superspecial reduction at infinitely many primes. 
\end{theorem}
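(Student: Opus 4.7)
The plan is to adapt the arithmetic-intersection argument of Baba-Granath \cite{MR2414789}, itself a quaternionic generalization of Elkies \cite{MR903384, MR1030140}, by carrying out the parallel height estimate on the Shimura curve $X^6$ over every place of $L$. Let $\cX$ denote a canonical integral model of $X^6$ over $\Spec\bZ$, so that $\cX_\bQ \cong \bP^1_j$ via the arithmetic $j$-function \eqref{eq:j}; the curve $C$ then defines a section $\phi_C\colon \Spec\cO_L \to \cX$ sending the generic point to $j_0$.

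The first step is to introduce, for each negative fundamental discriminant $D$ satisfying the Heegner condition with respect to $B_6$ (namely, $2$ and $3$ are inert in $\bQ(\sqrt{D})$), the quaternionic Hilbert class polynomial $H_D^{(6)}(j)\in \bZ[j]$, whose roots are the $j$-values of the CM points of discriminant $D$ on $X^6$. We form the rational number
\[
N_D \;:=\; \Nm_{L/\bQ}\bigl(H_D^{(6)}(j_0)\bigr)
\]
and compare its archimedean size with its prime factorization. The Deuring-type reduction theory for QM surfaces implies that at any good prime $\fp$ of $L$ with $\fp \nmid 6D$ dividing $N_D$, the reduction $\overline{\mathrm{Jac}(C)}$ acquires CM by the order of discriminant $D$, and the Heegner condition on $D$ forces this reduction to be superspecial. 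Hence it suffices to show that, as $D$ ranges over a suitable infinite family, the set of such primes $\fp$ is unbounded.

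The second step is the comparison. At the distinguished real embedding $\sigma_0$ provided by the hypothesis, $j_0^{\sigma_0}$ lies in an explicit real interval, namely $(-\tfrac{16}{27},0)$, $(0,\infty)$, or $(-\infty,-\tfrac{16}{27})\cup(0,\infty)$, depending on the case. In each case this interval is disjoint from the accumulation locus of the real Heegner points on $X^6(\bR)$, which yields the \emph{lower} bound $\log|H_D^{(6)}(j_0^{\sigma_0})| \gg h(D)\log|D|$. At the remaining archimedean embeddings $\sigma$, a Lehmer- or Baker-type bound on the distance from $j_0^\sigma$ to algebraic CM values produces an \emph{upper} bound for $\log|H_D^{(6)}(j_0^\sigma)|$ dominated by the main term. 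At the finite primes $\fp \mid 6$, the hypotheses $v_\fp(j_0) \leq 0$ or $v_\fp(j_0) \geq 0$ combined with \cite{MR2414789}*{Proposition 2} bound $v_\fp(N_D)\log N\fp$ by an expression involving $v_\fp(d)$, $v_\fp(n)$ and $v_\fp(27j_0+16)$ that is independent of $D$. The parity conditions in (1) and (3) guarantee, via a local root-number argument at $2$ or $3$, that the family of admissible $D$ remains infinite.

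Assembling the estimates, the contribution of good superspecial primes to $\log |N_D|$ must grow with $D$, which forces the set of such primes to be infinite. \textbf{The principal obstacle} is the archimedean estimate at the non-distinguished embeddings of $L$: for $[L:\bQ]>1$ one must rule out that $j_0^\sigma$ is anomalously close to a CM point of discriminant $D$, uniformly as $D\to\infty$. A secondary, more bookkeeping-heavy, obstacle is the local intersection analysis at the primes $\fp \mid 6$ under the weakened valuation hypotheses compared to \cite{MR2414789}: one must descend to a semistable model of $\cX$ at $2$ and $3$ and compute the multiplicities contributed by each component of the special fiber of the closure of the Heegner divisor, matching them against the parity conditions in the statement.
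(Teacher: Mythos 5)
Your proposal has a genuine gap at its core: a prime $\fp \nmid 6D$ dividing $N_D = \Nm_{L/\bQ}(P_D(j_0))$ does \emph{not} automatically give superspecial reduction. Such a divisibility only tells you that $\mathrm{Jac}(C)$ is congruent mod $\fp$ to a CM point of discriminant $D$, and by Deuring-type theory that reduction is supersingular if and only if $p$ is non-split in $\bQ(\sqrt{D})$, i.e.\ $\left(\frac{D}{p}\right) \neq 1$. Your ``Heegner condition'' ($2,3$ non-split in $\bQ(\sqrt D)$) governs only whether $\cO_D$ embeds in the order $\Lambda$ at all; it says nothing about the behavior at $p$. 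Consequently, showing that $\log|N_D|$ grows cannot by itself produce supersingular primes --- all the large prime factors could a priori be split in $\bQ(\sqrt D)$. The actual argument (following Elkies and Baba--Granath) is not a height comparison but a quadratic-reciprocity sign computation: one chooses $D$ of a specific shape ($-4l$, $-l$, or $-3l$ with $l$ in a prescribed class mod $24$, one shape per case of the theorem), determines $P_D$ modulo $2$, $3$, and $l$ (via the intersection formula for Heegner divisors at the bad primes and the pairing of roots mod $l$), and uses the real-root location to control the sign of $\Nm_{L/\bQ}(P_D(j_0))$, so that the Jacobi symbol $\left(\frac{D}{N}\right)$ equals $-1$ for $N = d^{h'}|N_D|$. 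This forces some prime factor $p$ of $N$ to satisfy $\left(\frac{D}{p}\right) = -1$, and that $p$ is the supersingular (hence superspecial) prime. The role of the real embedding is purely to fix a sign, not to produce an archimedean lower bound.

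This also disposes of what you call the principal obstacle: the uniform archimedean estimates you would need at the non-distinguished embeddings (ruling out that $j_0^\sigma$ is anomalously close to CM points as $D \to \infty$) are of Lehmer/Baker type and are not available; no such estimate is needed in the sign argument, which only requires knowing in which of the intervals $(-\infty,-\tfrac{16}{27})$, $(-\tfrac{16}{27},0)$, $(0,\infty)$ each real conjugate of $j_0$ lies relative to the (at most one per interval) real roots of $P_D$, with the equidistribution of real CM points used to place those roots where desired. Two smaller corrections: the discriminants actually used are $-4l$ and $-3l$, for which $2$ (resp.\ $3$) is \emph{ramified}, not inert, in $\bQ(\sqrt D)$ --- the embedding condition is non-splitness together with conductor prime to $6$; and at the primes above $2$ and $3$ the needed input is not a bound on $v_\fp(N_D)$ but the exact statement that $P_D \bmod 2$ and $P_D \bmod 3$ is a unit times a power of $x$ (or a unit), proved via the vanishing of the local intersection of $\cP_{D}$ with $\cP_{-3}$ or $\cP_{-4}$, which is what makes the reciprocity computation go through.
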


An abelian surface with action of a maximal order in a rational quaternion algebra $B$ has either ordinary or superspecial reduction modulo primes not dividing the discriminant of $B$ (\cite{Clark2003}*{p.70} and \cite{MR1097624}*{p.23}). It therefore suffices to construct primes of supersingular reduction, for which we follow Elkies' general strategy. Given a genus $2$ curve $C$, we find supersingular reduction of its Jacobian from its intersection with some Heegner cycle $\cP_D$ of discriminant $D$. The intersection at a prime $p$ is captured by $v_p(P_D(j_0))>0$, where $P_D(x)$ is the integral minimal polynomial of the $j$-invariants of the points in $\cP_D$, and when this occurs, supersingular reduction at $p$ is detected by $\left(\frac{D}{p}\right) \neq 1$. 

As in \cite{MR2414789}, we need to consider certain elliptic point in addition to the Heegner cycle in order to pair the roots of $P_D(x)$ modulo primes dividing $D$. We use \cite{MR2704678} to generalize \cite{MR2414789} through a more detailed, case-by-case study, where in each case the discriminant $D$ is chosen in a form adapted to the conditions. 
In characteristic $2$ and $3$, \cite{MR2414789} shows that the reduction of any Heegner cycle lies in the superspecial locus $\{j = 0, \infty\}$, and the intersection formula of Heegner divisors in \cite{MR2441697} provides the new input that determines the specific superspecial point for each chosen Heegner cycle. The local conditions on $j_0$ at primes above  $2$ and $3$ ensure that it avoids intersection with the chosen Heegner cycles at these primes. 
For more on the choice of local conditions and Heegner cycles, and for potential further weakenings of these conditions, see \Cref{rem:conclude}.

\subsection{Notation and conventions}

Assume the following unless specified otherwise.

Let $B = B_{\Delta}$ be an indefinite quaternion algebra over $\bQ$ of discriminant $\Delta$,  $\Lambda = \Lambda_{\Delta}$ be a maximal order of $B$, and $\Lambda^1 = \Lambda_{\Delta}^1$ be the group of units in $\Lambda$ of norm $1$. Fix an element $\mu \in \Lambda_{\Delta}$ such that $\mu^2 = -\Delta$,\footnote{The element $\mu$ exists because the field $\bQ(\sqrt{-\Delta})$ embeds in $B$ by the local-global principle, and any two maximal orders in $B$ are conjugate to each other by strong approximation.} then the involution $\alpha \mapsto \alpha' = \mu^{-1} \bar{\alpha} \mu$ is a positive anti-involution on $B$.

\section{Preliminaries}

In this section, we recall the setup in \cite{MR2414789}. 
\subsection{The Shimura modular curves}

    Let $V_{\Delta}$ be the Shimura curve over $\bQ$ associated to $\Lambda$. It is the coarse moduli space of isomorphism classes of $[A, \iota]$, where $A$ is a principally polarized abelian surface, and $\iota: \Lambda  \xhookrightarrow{} \End(A)$ is an embedding such that the Rosati involution defined by the polarization on $\iota(\Lambda_{\Delta})$ is $'$.\footnote{Given an abelian surface $A$ and $\iota: \Lambda \xhookrightarrow{} \End(A)$, there is a unique principal polarization on $A$ such that the induced Rosati involution is compatible with $\mu$. }  
    Fix an isomorphism  $\iota_{\infty}: B_{\bR} \xrightarrow{\sim} M_2(\bR)$. For any point $\tau$ on the upper half plane $\cH$, consider the complex torus $A_{\tau} = \bC^2 / \Lambda_{\tau}$, where \[\Lambda_{\tau} = \iota_{\infty}(\Lambda)\begin{pmatrix}
        \tau \\ 1
    \end{pmatrix}\] is a lattice in $\bC^2$, the map $\iota_{\infty}$ induces a natural embedding $\iota_{\tau}: \Lambda_{\tau} \xhookrightarrow{} \End(A_{\tau})$ as $\Lambda$ is closed under multiplication. After possibly replacing $\mu$ by $-\mu$ once and for all, the Riemann form \begin{align*}
        E_{\tau}: \Lambda_{\tau}\times\Lambda_{\tau} &\to \bZ, \\ \left(\iota_{\infty}(x)\begin{pmatrix}
        \tau \\ 1
    \end{pmatrix}, \iota_{\infty}(y)\begin{pmatrix}
        \tau \\ 1
    \end{pmatrix}\right) &\mapsto \frac{1}{\Delta}\trd(\mu x \bar{y})
    \end{align*} defines the unique principal polarization on $A_{\tau}$ compatible with $\iota_{\tau}$.    
    Let $\Gamma^1$ be the image in $\PSL_2(\bR)$ of $\iota_{\infty}(\Lambda^1)$, then the map $\Gamma^1 \tau \mapsto [A_{\tau}, \iota_{\tau}]$ gives a bijection $\Gamma^1\backslash\cH \to V_{\Delta}(\bC)$. 

    There is the natural forgetful map $V_{\Delta} \to \cA_2$ from $V_{\Delta}$ to the moduli threefold of principally polarized abelian surfaces under which $[A, \iota]$ maps to $[A, \cC]$ where $\cC$ is the unique principal polarization on $A$ compatible with $\iota$. Let $E_{\Delta}$ denote its image and $E_{\Delta}^0$ denote the intersection of $E_{\Delta}$ with the image of the moduli space of genus $2$ curves under the open Torelli map. 
    
    From now on, let $\Delta = 6$. By \cite{MR2034317}, 
    the forgetful map $V_{6} \to \cA_2$ has degree $4$ and factors as $V_{6} \to V_{6}/W_{6} \simeq E_{6} \xhookrightarrow{} \cA_2$, where the Atkin-Lehner group $W_6 = N_{B^\times}(\Lambda)/\bQ^{\times}\Lambda^\times =\{1, w_2, w_3, w_6\} \simeq \prod_{p|6} \bZ/2\bZ$, and $w_d$ is represented by some $\chi_d \in \cO \cap N_{B^\times}(\Lambda)$ with $\trd(\chi_d) = 0, \, \nrd(\chi_d) = d$. 
    Moreover, there is a unique $N_{B^\times}(\Lambda)$-conjugacy class of embedding $\bZ[\sqrt{-6}] \xhookrightarrow{}\Lambda$, then the image $E_6$ is independent of the choice of $\mu$ and $E_6$ is the moduli space of principally polarized abelian surfaces with potential $QM$ by $\Lambda$. 

    \[\begin{tikzcd}
	& {V_6} \\
	{E_6^0} & {E_6} \\
	{\mathcal{M}_2} & {\mathcal{A}_2}
	\arrow["{W_6}", from=1-2, to=2-2]
	\arrow[hook, from=2-1, to=2-2]
	\arrow[hook, from=2-1, to=3-1]
	\arrow[hook, from=2-2, to=3-2]
	\arrow[hook, from=3-1, to=3-2]
\end{tikzcd}\]
\subsection{CM points}
    Let $K$ be an imaginary quadratic field and suppose $K\xhookrightarrow{} B$ embeds. Let $\cO_D = K\cap \Lambda$ be an order of discriminant $D$, then the fixed point $\tau$ of $\iota_{\infty} (\cO_D)$ in $\cH$ is a CM point, its corresponding abelian variety $A_{\tau}$ has $\End_{\mathrm{QM}}(\cA_{\tau}) \simeq \cO_D$. 
    \begin{rem}
        By Eichler, the number of $\Lambda^\times$-conjugacy classes of optimal embeddings $\cO_D\xhookrightarrow{} \Lambda$ is \[s(\cO_D):=h(\cO_D) \prod_{p|\Delta}\left(1-\left(\frac{\cO_D}{p}\right)\right), \] where 
        \[\twocase{\left(\frac{\cO_D}{p}\right):=}{1}{if $p$ divides the conductor of $\cO_D$,}{\left(\frac{K}{p}\right)}{otherwise.}\]Note that $s(\cO_D) \neq 0$ if and only if the conductor of $\cO_D$ is relatively prime to $\Delta$ and $K$ splits $B$. Therefore, in the case $2|\Delta$, there are no points with CM by an imaginary quadratic order of conductor $2$.  
    \end{rem}
    From now on, let $D$ be a fundamental discriminant such that $\cO_D\xhookrightarrow{} \Lambda$ and $E_6(D)$ denote the set of points with CM by $\cO_D$. The Hilbert class group of $K$ acts on $E_6(D)$ and the complex conjugation preserves $E_6(D)$. Let $W''$ denote the subgroup of $W$ generated by elements $w_p$, where $p|\Delta$ is a prime ramified in $K$, then the number of elements in $E_6(D)$, counted with appropriate multiplicities, is \[h' = \frac{h(\cO_D)}{\#W''}. \] By genus theory and \cite{MR963648}*{19.6}, the parity of $h'$ can be determined. Let $l$ be a prime. The relevant cases considered in this work are summarized in the following table.
    
    \begin{table}[h]
\centering
\begin{tabular}{|c|c|c|}
\hline
$D$ &  $h(\cO_D)$ & $h'$ \\
\hline
$-4l, \, l\equiv 13\bmod 24$          &   $\equiv 2\pmod 4$          &    odd      \\
$-l, \ l\equiv 19\bmod 24$    &      odd       &     odd        \\
$-3l, \, l\equiv 1\bmod 24$    &  $\equiv 0\pmod 4$         &    even       \\
\hline
\end{tabular}
\end{table}
    
\subsection{The coordinate functions} \label{sec:coord}
    It is shown in \cite{MR2423455}*{3.6} that there is an isomorphism $j = j_6 : E_6^0 \to \bP^1\backslash\{0, \infty\}$ given by \begin{equation} \label{eq:j}
        j = \frac{12^{10} J_{10}^2}{(J_2^2 - 24 J_4)^5}.
    \end{equation}
Denote the elements of $E_6(D)$ to be $a_1, \dotsc, a_{h'}$, and define \[P_D(x) = b_{h'} \prod_{i=1}^{h'} (x - j(a_i)),\] where $b_{h'} > 0$ is the smallest integer such that $P_D(x) \in \bZ[x]$. 
    
Elkies (\cite{MR1726059}) computed a different rational coordinate function $t: E_6\to \bP^1$, which is used in \cite{MR2704678}. The relation between the two coordinate functions is $j = 16(t-1)/27$. In particular, from the following correspondence of the coordinates of the elliptic points we can translate the results in \cite{MR2704678} to our setting. 

\begin{table}[h]
\centering
\begin{tabular}{|c|c|c|c|}
\hline
\text{Elliptic point of order} & \text{CM} & $j$ & $t$ \\
\hline
6 & $\mathbb{Z}\left[\frac{1+\sqrt{-3}}{2}\right]$ & $\infty$ & $\infty$ \\
4 & $\mathbb{Z}\left[\sqrt{-1}\right]$ & $0$ & $1$ \\
2 & $\mathbb{Z}\left[\sqrt{-6}\right]$ & $-\dfrac{16}{27}$ & $0$ \\
\hline
\end{tabular}
\end{table}

\begin{lemma}[\cite{MR2704678}*{3.3.1}]
    Let $l \geq 5$ be a prime. 
    For discriminants $D$ of the form in the table, the polynomial $P_D(x)$ has at most one real root in each of the intervals $I_1 = (-\infty, -\frac{16}{27})$, $I_2 = ( -\frac{16}{27}, 0)$, and $I_3 = (0, \infty)$, with the roots being located as follows: (see \cite{MR2704678}*{3.3.1} for a complete list, $*$ denotes the presence of a real root in the interval)

\begin{center}
\begin{tabular}{|c|c|c|c|}
\hline
$D$ &  $I_1$ & $I_2$ & $I_3$ \\
\hline
$-4l, \, l\equiv 13\bmod 24$          &             &    $*$     &         \\
$-l, \ l\equiv 19\bmod 24$    &             &      & $*$         \\
$-3l, \, l\equiv 1\bmod 24$    &  $*$         &     &     $*$        \\
\hline
\end{tabular}\end{center}

Furthermore, for any subinterval and sub-congruence class of any starred entry, there exist infinitely many primes $l$ in that sub-congruence class for which $P_D(x)$ has a real root in the subinterval. 
\end{lemma}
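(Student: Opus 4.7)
The plan is to transfer the question on real roots of $P_D$ to a statement about the real locus of the Shimura curve $E_6$ under the $j$-coordinate, and then to invoke equidistribution of CM points to conclude about infinitely many primes.

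First I would describe $E_6(\bR)$ explicitly. Since $j : E_6^0 \to \bP^1 \setminus \{0,\infty\}$ is an isomorphism, $E_6(\bR)$ maps injectively into $\bR \cup \{\infty\}$. The three elliptic points, with $j$-values $0,-\tfrac{16}{27},\infty$, are automatically real and decompose the real locus into the three arcs $I_1, I_2, I_3$. Using the description of $V_6(\bR)$ via complex conjugation on $\Gamma^1\backslash \cH$ (equivalently, via embeddings of real quadratic orders in $B$), I would check that each arc $I_k$ is the image of exactly one connected real component of $V_6$ after passing to the Atkin-Lehner quotient. Once this is known, any fiber of $j$ over a point in $I_k$ contains at most one real CM point of a given discriminant, forcing the ``at most one real root in each $I_k$'' assertion.

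Next, for the existence of real roots in the starred intervals, I would count real CM points in each $I_k$. A CM point of discriminant $D$ corresponds to a $\Lambda^\times$-conjugacy class of optimal embeddings $\cO_D \hookrightarrow \Lambda$, and it gives a real root of $P_D$ iff its $W_6$-orbit is fixed by complex conjugation. Using Eichler's formula $s(\cO_D)$, the parity of $h' = h(\cO_D)/\#W''$ in the preceding table, and genus theory for the class group of $\cO_D$, I would verify that in each starred case complex conjugation fixes a $W_6$-orbit of CM points lying over the claimed arc. The congruence conditions on $l \bmod 24$ enter here precisely to determine the splitting behavior of $2$, $3$, and $l$ in $\bQ(\sqrt{D})$, hence the structure of $W''$ and the action of complex conjugation on the class group.

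Finally, for the statement about infinitely many primes $l$ in any sub-congruence class with a root in any subinterval, I would appeal to equidistribution of CM points on $E_6(\bC)$ (Duke--Clozel--Ullmo--Michel), combined with Chebotarev to single out primes in the given sub-congruence class, and then restrict to real components of $E_6$ (which by the previous steps map onto the starred arcs). This forces CM points of discriminant $D$ to appear in any prescribed open subinterval for infinitely many such $l$. The main obstacle I expect is the component analysis in the first step: identifying the connected real components of $V_6$, tracking them through the Atkin-Lehner quotient, and matching each resulting arc with a precise class field theoretic reality condition on the optimal embedding of $\cO_D$. This is where the uniformization of $V_6$, the involutions $w_2, w_3, w_6$, and the three elliptic points interact in the most delicate way.
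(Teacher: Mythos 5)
The paper does not reprove the first two assertions of this lemma: the ``at most one real root per arc'' claim and the table of starred intervals are quoted directly from \cite{MR2704678}*{3.3.1}. Your proposed component analysis of $V_6(\bR)$, the passage through the Atkin--Lehner quotient, and the reality condition on optimal embeddings via genus theory is a plausible sketch of what that reference does, and I have no substantive objection to those steps. The problem is your third step.

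For the ``furthermore'' clause you invoke Duke--Clozel--Ullmo--Michel equidistribution of CM points on $E_6(\bC)$ together with Chebotarev. This does not give the statement. Duke-type equidistribution concerns the full Galois orbit of the $h'$ CM points of discriminant $D$ equidistributing against the hyperbolic measure on the \emph{complex} curve; the real locus is a measure-zero subset of $E_6(\bC)$, so this carries no information about where the single real CM point of discriminant $D$ sits inside the arc $I_k$. What is needed is equidistribution along the closed geodesic itself as $l$ varies: the position of the real CM point of discriminant $-Nl$ on the relevant arc is governed by $\frac{1}{2}\ln|\pi/\pi'| \bmod \ln\varepsilon_i$, where $l = \pi\pi'$ is the splitting of $l$ in the real quadratic field $F_i$ (one of $\bQ(\sqrt{2})$, $\bQ(\sqrt{3})$, $\bQ(\sqrt{6})$) attached to that arc. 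The paper's subsequent remark constructs the explicit Hecke character $\sigma_i:\bA_{F_i}^{\times}\to \bR/(\ln\varepsilon_i)\bZ$ and applies Hecke's equidistribution theorem for primes of $F_i$ (and its product version on the compositum $F_{i_1}F_{i_2}$ when two arcs must be controlled simultaneously, as for $D=-3l$); the sub-congruence condition on $l$ is absorbed into the character rather than handled by a separate Chebotarev step. Unless you replace the appeal to Duke's theorem by an argument of this kind --- or by a genuine equidistribution theorem for real CM points on closed geodesics, which is a different and stronger statement than the one you cite --- the last part of the lemma is not established.
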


\begin{rem}
    Let $F_1 = \bQ(\sqrt{2})$, $F_2 = \bQ(\sqrt{3})$, $F_3 = \bQ(\sqrt{6})$, and $\varepsilon_i$ be a fundamental unit of $F_i$, $i=1,2,3$. 
    The claim that any subinterval of an interval $I_i$, in which polynomials $P_D$ with $D$ of the specified form have real roots, supports infinitely many such $D$ follows from an equidistribution result (\cite{MR1282723}*{\MakeUppercase{\romannumeral 15}, \S5}) for primes in $F_i$, by constructing a Hecke character $\sigma_i: \mathbb{A}_{F_i}^{\times} \to \mathbb{R} / (\ln{\varepsilon_i}) \mathbb{Z}$ given by \begin{align*}\mathbb{A}_{F_i}^{\times}/(F_i^{\times}  \prod_{\mathfrak{p}\nmid \infty} U_{\mathfrak{p}}) \simeq (\bR^{\times} \times \bR^{\times})/U_{F_i} &\to \mathbb{R} / (\ln{\varepsilon_i}) \mathbb{Z},\\
    (a,b) &\mapsto \frac{1}{2}\ln{\frac{|a|}{|b|}}.\end{align*}
    If the polynomials $P_D$ have real roots in both $I_{i_1}$ and $I_{i_2}$, then the result generalizes to any open subset of $I_{i_1}\times I_{i_2}$ by equidistribution of primes in the compositum $F_{i_1}F_{i_2}$. For example,  
    let $F = F_1F_3$ and define $\sigma = (\sigma_1\circ \Nm_{F/F_1}, \sigma_3\circ \Nm_{F/F_3}): \mathbb{A}^{\times}_{F} \to \mathbb{R}/(\ln{\varepsilon_1})\bZ \times \mathbb{R}/(\ln{\varepsilon_3})\bZ $, then $\sigma(a,b,c,d) = (\frac{1}{2}\ln{\frac{|ac|}{|bd|}}, \frac{1}{2}\ln{\frac{|ab|}{|cd|}})$ for $(a,b,c,d) \in \bR^{\times}\times \bR^{\times}\times \bR^{\times}\times \bR^{\times} \xhookrightarrow{} \mathbb{A}_F^{\times}$. In particular, $\sigma(a,\frac{1}{a},1,1) = (\ln{|a|}, 0)$ and $\sigma(a,1,\frac{1}{a},1) = (0,\ln{|a|})$, so the restriction of $\sigma$ to the subgroup of ideles of norm $1$ is surjective. Let $\tau:\{\text{primes of } F\}\to \mathbb{A}_F^{\times}$ be the map taking a prime $\mathfrak{p}$ to an idele that is $1$ at all places except a prime element of $F_{\mathfrak{p}}$ at $\mathfrak{p}$, and $\lambda = \sigma\circ\tau$, then the set of primes of $F$ is $\lambda$-equidistributed in $\mathbb{R}/(\ln{\varepsilon_1})\bZ \times \mathbb{R}/(\ln{\varepsilon_3})\bZ $. Since a density $1$ of primes of $F$ lies above a totally split rational prime, the set \[\left\{\left(\frac{1}{2}\ln\left|\frac{\pi_1}{\pi_1'}\right|, \frac{1}{2}\ln\left|\frac{\pi_3}{\pi_3'}\right|\right) \, : \, l \text{ rational prime}, (l) = (\pi_1)(\pi_1') \text{ in } F_1, (l) = (\pi_3)(\pi_3')\text{ in } F_3\right\}\] is equidistributed in $\mathbb{R}/(\ln{\varepsilon_1})\bZ \times \mathbb{R}/(\ln{\varepsilon_3})\bZ $. 
\end{rem}

\begin{lemma}[\cite{MR2704678}*{3.4.1, 3.4.2}]\label{lem:pair}
    Let $l\geq 5$ be a prime. 
    For discriminants $D$ of the form in the table, each root of $P_D(x)\bmod l$ occurs with even multiplicity, except possibly for roots corresponding to points on $E_6$ which are congruent modulo $l$ to one of the three elliptic points. The divisor of unpaired zeros of $P_D(x)$ modulo $l$ is as follows: (see \cite{MR2704678}*{3.4.2} for a complete list)\begin{center}
\begin{tabular}{|c|c|}
\hline
$D$ &  divisor \\
\hline
$-4l, \, l\equiv 13\bmod 24$       &   $(-16/27)$   \\
$-l, \ l\equiv 19\bmod 24$    &   $(-16/27) $       \\
$-3l, \, l\equiv 1\bmod 24$    &  $\emptyset$    \\
\hline
\end{tabular}
\end{center}
\end{lemma}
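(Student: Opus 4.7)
The plan is to study the reduction modulo $l$ of the CM divisor $V_6(D)$ and to use a pairing on its superspecial reductions, descended to $E_6$, to identify the unpaired zeros with the fixed points of the pairing.

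First, since $l \geq 5$ the Shimura curve $V_6$ has good reduction at $l$, so the CM divisor $V_6(D) \subset V_6$ of degree $h(\cO_D)$ reduces to a well-defined $0$-cycle in $V_6(\overline{\bF_l})$. In each case of the table $l \mid D$, so $l$ ramifies in $K = \bQ(\sqrt{D})$; consequently, by Deuring-type reduction theory, the reductions lie in the superspecial locus of $V_6 \otimes \overline{\bF_l}$. The underlying abelian surfaces acquire endomorphisms by a maximal order $\cO'$ in the quaternion algebra $B'$ ramified at $\{l,\infty\}$, and the reductions correspond (up to conjugation) to optimal embeddings $\cO_D \hookrightarrow \cO'$.

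Second, I would construct a pairing using the unique two-sided $\cO'$-ideal $\mathfrak{l}$ of norm $l$: conjugation by $\mathfrak{l}$ (equivalently, composition with the relative Frobenius on the superspecial abelian surface) induces an involution $w_l$ on the set of optimal embeddings and hence on the reductions of $V_6(D)$. This pairs the reductions into orbits of size $2$, except at fixed points of $w_l$. Via the Deuring correspondence, the $w_l$-fixed points on $V_6 \otimes \overline{\bF_l}$ are exactly the reductions of the three elliptic points on $E_6$ with $j \in \{0, -16/27, \infty\}$ (CM by $\bZ[i]$, $\bZ[\sqrt{-6}]$, and $\bZ[\frac{1+\sqrt{-3}}{2}]$), since these are the only CM orders in $B$ whose image in $B'$ meets the local order at $l$ in an exceptional way.

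Third, I would descend the pairing from $V_6$ to $E_6 = V_6/W_6$. Since $l \nmid 6$, the $W_6$-action commutes with $w_l$, so the pairing descends to $E_6(D) \bmod l$ and pairs roots of $P_D(x) \bmod l$. For each form of $D$ I would then verify by an Eichler-type count which of the three elliptic points actually receives a reduction from a point of $E_6(D)$: the congruence conditions $l \equiv 13, 19, 1 \pmod{24}$ fix the quadratic characters $(D/2)$ and $(D/3)$, ruling out $j = 0$ (CM by $\bZ[i]$) and $j = \infty$ (CM by $\bZ[\frac{1+\sqrt{-3}}{2}]$) in the first two cases, leaving only $j = -16/27$ as a possible unpaired root. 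In the $D = -3l$ case, $h'$ is even and the pairing is complete, yielding the empty divisor.

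The main obstacle is the final Eichler count: one must carefully match optimal embeddings of the three elliptic orders into $\cO'$ against the local structure of $\cO_D$ at the primes $2$, $3$, and $l$, and use the congruence conditions modulo $24$ to pin down precisely which elliptic points are realized by reductions of points in $E_6(D)$, so that parity of $h'$ alone controls the listed unpaired divisor.
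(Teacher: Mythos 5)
The paper does not actually prove this lemma: it is imported wholesale from \cite{MR2704678}*{3.4.1, 3.4.2}, so the only fair comparison is with the argument of that source, whose outline your sketch does follow (reduction of the CM divisor to the superspecial locus at the ramified prime $l$, pairing by the norm-$l$ two-sided ideal / Frobenius, identification of fixed points with elliptic points, then an optimal-embedding count). Within that outline, however, there are two concrete problems. First, the definite quaternion algebra governing the superspecial locus of $V_6$ in characteristic $l$ is not the one ramified at $\{l,\infty\}$ (that is the elliptic-curve picture); for a Shimura curve of discriminant $6$ with $l\nmid 6$ it is the algebra ramified at $\{2,3,l,\infty\}$, of discriminant $6l$ --- exactly as the paper's own \Cref{rem:conclude} indicates when it speaks of ``a maximal order in a definite quaternion algebra ramified at $2,3,p$'' containing two anticommuting CM orders. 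The subsequent fixed-point analysis and embedding counts all take place in that order, so this is not a cosmetic slip.

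Second, and more seriously, the actual content of the lemma is the table, i.e.\ \emph{which} elliptic points occur with odd multiplicity for each form of $D$, and your sketch defers precisely this step (``the main obstacle is the final Eichler count''). The parity argument you offer for $D=-3l$ is not sufficient: $h'$ even only forces the divisor of unpaired zeros to have even degree, which still allows, e.g., $(0)+(\infty)$ or $(-16/27)+(\infty)$; ruling these out requires the anticommuting-embedding computation of \cite{MR2704678}*{3.4.2} (equivalently, representability conditions for ternary quadratic forms attached to the maximal order of discriminant $6l$, which is where the congruences $l\equiv 13, 19, 1 \bmod{24}$ actually enter). Likewise the assertion that the congruence classes ``rule out'' $j=0$ and $j=\infty$ in the first two cases, and that the $w_l$-fixed points are \emph{exactly} the reductions of the three elliptic points, are stated without the supporting local computations. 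As it stands the proposal reproduces the strategy but not the proof; to be complete it would need to carry out the embedding counts case by case, or else simply cite \cite{MR2704678} as the paper does.
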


\section{Integral model and intersection number at bad primes}
In this section, we compute $P_D(x)$ modulo $2$ and $3$ by computing intersection numbers of Heegner divisors on the integral model of $E_6$. 

Let $\mathcal{X} = \mathcal{X}_{1,6,m}$ be the moduli space for abelian surfaces with additional structure as in \cite{MR2441697}*{\S2}.  
An integral model $\mathcal{E}$ of $E_6$ (resp. $\mathcal{V}$ of $V_6$) is obtained by the quotient of $\mathcal{X}$ by a finite subgroup of order $4\cdot 2\eta(m)$ (resp. $2\eta(m)$), where $\eta(m)$ is defined in \cite{MR2441697}*{(3.4)}. 

Let $p$ be a prime. For $p\neq 2,3$, the curve $E_6$ has good reduction at $p$, and $\cE_{\bF_p} \simeq \bP^1_{\bF_p}$. Suppose $p|\Delta$. 
As a consequence of the theorem of \v{C}erednik-Drinfeld (see for instance \cite{MR1141456}), the formal completion of the integral model along its special fiber at $p$ is isomorphic to a finite union of Galois twists (over unramified extensions) of quotients of Mumford curves (\cite{MR352105}). Then the geometric special fiber  $\mathcal{E}_{\overline{\bF}_p}$ of $\mathcal{E}$ at $p$ can be viewed as a graph, and we follow \cite{MR523989} to compute its dual graph and thus determine $\mathcal{E}_{\bF_p}$.

\begin{lemma}
    For $p = 2,3$, the special fiber $\cE_{\bF_p} \simeq \bP^1_{\bF_p}$. 
\end{lemma}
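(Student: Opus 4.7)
The plan is to use the \v{C}erednik--Drinfeld uniformization to describe $\mathcal{V}_{\bar{\bF}_p}$ explicitly for $p\in\{2,3\}$, determine the action of $W_6$ on its dual graph, and then pass to the quotient $\mathcal{E}=\mathcal{V}/W_6$ to show that the special fiber is irreducible, geometrically reduced, and of arithmetic genus zero, hence $\bP^1_{\bF_p}$.

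By \v{C}erednik--Drinfeld (\cite{MR1141456}), for $p\mid 6$ the formal completion of $\mathcal{V}$ along its special fiber at $p$ is a twist of a Mumford quotient $\Gamma_p\backslash\Omega_p$, where $\Omega_p$ is Drinfeld's $p$-adic upper half plane and $\Gamma_p$ arises from the unit group of a maximal order $\Lambda'$ in the definite quaternion algebra $B'$ of discriminant $6/p$; so $B'=B_3$ when $p=2$, and $B'=B_2$ when $p=3$. Following \cite{MR523989}, the dual graph of $\mathcal{V}_{\bar{\bF}_p}$ is $\Gamma_p\backslash T$, where $T$ is the Bruhat--Tits tree of $\mathrm{PGL}_2(\bQ_p)$. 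Since both $B_2$ and $B_3$ have class number one, this quotient is a bipartite graph with exactly two vertex orbits (one for each vertex type of $T$), and a small number of edge orbits controlled by the finite groups $\Lambda_{B_2}^\times/\{\pm 1\}$ (order $12$) and $\Lambda_{B_3}^\times/\{\pm 1\}$ (order $6$).

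Next, compute the $W_6$-action on the dual graph. The involution $w_p$ is represented by an element of the Drinfeld normalizer that swaps the two vertex types of the bipartite graph, and $w_{6/p}$ acts through the normalizer of $\Lambda'$ in $B'^{\times}$ modulo $\Lambda'^\times$. The task is to verify that, after quotienting by $W_6$, the two vertex orbits fuse into one and all edge orbits are collapsed, so that the quotient dual graph is a single vertex without loops. Once this is done, $\mathcal{E}_{\bar{\bF}_p}$ is geometrically irreducible and geometrically reduced, and its arithmetic genus equals that of the generic fiber $E_6\simeq \bP^1_{\bQ}$ (via the $j$-function of \eqref{eq:j}), namely $0$. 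Any irreducible reduced curve of arithmetic genus $0$ over an algebraically closed field is $\bP^1$, and descent to $\bF_p$ follows from the presence of $\bF_p$-rational points (e.g.\ from the elliptic points $j\in\{0,\infty,-16/27\}$, which are $\bQ$-rational on $E_6$ and specialize).

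The main obstacle will be the bookkeeping of the $W_6$-action together with the stabilizers at the three elliptic points, which reduce non-trivially at $p=2,3$: one must check that these stabilizers do not produce surviving nodes, cusps, or non-reduced structure in the quotient. This reduces to explicit character computations in the finite unit groups $\Lambda_{B_2}^\times/\{\pm 1\}$ and $\Lambda_{B_3}^\times/\{\pm 1\}$, combined with the known action of $W_6$ on the three CM classes $\bZ[\sqrt{-1}]$, $\bZ[(1+\sqrt{-3})/2]$, and $\bZ[\sqrt{-6}]$ appearing in the elliptic-point table of \Cref{sec:coord}.
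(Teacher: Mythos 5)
Your overall route is the same as the paper's: \v{C}erednik--Drinfeld uniformization, Kurihara's description of the dual graph, the class numbers $h(B_2)=h(B_3)=1$, and then ``irreducible of arithmetic genus $0$ with a rational point, hence $\bP^1$.'' The difference is in how the Atkin--Lehner quotient is handled, and this is where your proposal has a real gap: you uniformize $\mathcal{V}$ first, obtain a two-vertex bipartite dual graph, and then announce as a remaining ``task'' and ``main obstacle'' the verification that the $W_6$-action fuses the vertex orbits and collapses all edge orbits, including the analysis of stabilizers at the elliptic points. That verification is never carried out, so as written the argument does not establish irreducibility of $\mathcal{E}_{\overline{\bF}_p}$ --- and it is exactly the nontrivial content of the lemma.

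The paper sidesteps this bookkeeping entirely by forming the quotient at the level of the uniformizing group rather than of the graph: it introduces $\Gamma_0=\mathfrak{O}[p^{-1}]^{\times}/\bZ[p^{-1}]^{\times}$ (which already contains elements of odd $p$-adic valuation, so $\Gamma_0\backslash I$ has $h(B_{\Delta/p})$ vertices, not $2h$) and the full normalizer $\Gamma^{*}$ of $\mathfrak{O}[p^{-1}]$, whose quotient of the tree computes the dual graph of $\mathcal{E}_{\overline{\bF}_p}$ directly (after removing self-inverse edges). Since $\Gamma^{*}\backslash I$ is a further quotient of the one-vertex graph $\Gamma_0\backslash I$, irreducibility is immediate from $h(B_2)=h(B_3)=1$, with no case analysis of the $W_6$-action on edges or of elliptic-point stabilizers. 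If you want to keep your two-step structure, you must actually do the unit-group and edge-orbit computations you defer; otherwise, replacing $\Gamma_p$ by the normalizer group from the start gives the clean argument.
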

\begin{proof}
    Let $B_{\Delta/p}$ be the definite quaternion algebra of discriminant $\frac{\Delta}{p}$, and $\mathfrak{O}$ be a maximal order of $B_{\Delta/p}$, so that $\mathfrak{O} \otimes \bZ_{q} \simeq \Lambda\otimes \bZ_{q}$ for any prime $q\neq p$. Let \begin{align*}
        \Gamma_0 &= \mathfrak{O}[p^{-1}]^\times / \mathbb{Z}[p^{-1}]^\times, \\
        \Gamma^* & = \{\gamma \in B_{\Delta/p}^\times: \gamma \mathfrak{O}[p^{-1}] = \mathfrak{O}[p^{-1}] \gamma\}
    \end{align*} regarded as discrete subgroups of $\PGL_2(\bQ_p)$, and $I$ denote the Bruhat-Tits tree. We have  graphs with lengths $\Gamma_0\backslash I$ and $\Gamma^*\backslash I$ defined by the quotient of the Bruhat-Tits tree \cite{MR523989}*{\S3}. The number of vertices of $\Gamma_0\backslash I$ equals the class number $h(B_{\Delta/p})$ of $B_{\Delta/p}$ \cite{MR523989}*{p.291}. 

    As explained in detail in \cite{MR523989}, the dual graph of the geometric special fiber $\mathcal{E}_{\overline{\bF}_p}$ is obtained from $\Gamma^* \backslash I$ by removing self-inverse edges. The graph $\Gamma^* \backslash I$ is a quotient of $\Gamma_0 \backslash I$, and we have $h(B_2) = 1$ and $h(B_3) =1$, so both $\mathcal{E}_{\overline{\bF}_2}$ and $\mathcal{E}_{\overline{\bF}_3}$ consist of a single irreducible component.   
    Since the generic fiber $E_6\simeq \bP^1_{\bQ}$, which has genus $0$ and a rational point, it follows that $\mathcal{E}_{\bF_2}\simeq \bP^1_{\bF_2}$ and $\mathcal{E}_{\bF_3} \simeq \bP^1_{\bF_3}$. 
\end{proof}

\subsection{Intersection number}

\begin{defn}
    Let $R$ be a Dedekind domain and $\mathcal{X} \to \Spec R$ be an arithmetic surface. 
    Let $D$ and $E$ be two effective divisors on $\cX$ with no common irreducible component. Let $z_0\in \mathcal{X}$ be a closed point. The local intersection number $i_{z_0}(D,E)$ of $D$ and $E$ at $z_0$ is the length of the $\cO_{\cX,z_0}$-module $\cO_{\cX, z_0} / (\cO_{\cX}(-D)_{z_0} + \cO_{\cX}(-E)_{z_0})$. 
\end{defn}

\begin{example}
    Let $R$ be a discrete valuation ring with field of fraction $K$, maximal ideal $\fp$ and residue field $k$. Let $x,y\in \cX(K)$ be distinct, then $x,y$ extend uniquely to $\ul{x}, \ul{y} \in \cX(R)$ by properness of $\mathcal{X} \to \Spec R$, and $\ul{x}, \ul{y} $ are closed immersions since they are sections to a separated map. Define \[(\ul{x}. \ul{y}) :=  \sum_{z\in \cX_k} i_z(\ul{x}, \ul{y}).\] Let $x_n, y_n \in \cX(R/\fp^n)$ be the reduction of $\ul{x}, \ul{y}$ modulo $\fp^n$ for positive integer $n$. Suppose $x\neq y$, 
    then as in \cite{MR2705896}*{3.13}, \[(\ul{x}. \ul{y}) = \max\{n : x_n = y_n \}. \] In particular, 
    \begin{enumerate}
        \item if $\cX$ is a fine moduli space with a universal object $\cA\to \cX$, then the complete local ring $\widehat{\cO}_{\cX, z_0}$ is the universal deformation ring for $z_0$ and \[(\ul{x}. \ul{y}) = \max\{n : \cA_{\ul{x}} \simeq \cA_{\ul{y}} \mod{\fp^n} \};\]
        \item if $\cX = \bP^1_{R}$ and $x_1 = y_1 = z_0 \in \cX(k)$, then \[(\ul{x}. \ul{y}) = i_{z_0}(\ul{x}, \ul{y}) = \begin{cases}
          v_{\fp}(x-y)   & v_{\fp}(x) \geq 0 , v_{\fp}(y)\geq0,\\
          v_{\fp}(\frac{1}{x} - \frac{1}{y}) & v_{\fp}(x) <0 \text{ or }x = \infty, v_{\fp}(y) <0 \text{ or }y = \infty.
        \end{cases}\]
    \end{enumerate}
\end{example}

Consider $\mathcal{X} \xrightarrow{2\eta(m)} \cV \xrightarrow{4} \cE$. We will use \cite{MR2441697}*{(3.12)} to compute the arithmetic intersection number. Let $D$ be a fundamental discriminant, and $\cP_{D,\cX}$ be the Heegner divisor of discriminant $D$ on $\cX$ as defined in \cite{MR2441697}*{\S2} and $\cP_{D, \cE} = \sum_{x\in E_6(D)} \ul{x}$ be the divisor on $\cE$. On $V_6$, the universal automorphism group is $\{\pm1\}$, the elliptic points of order $3$ have CM by $\bZ[\frac{1+\sqrt{-3}}{2}]$ and the elliptic points of order $2$ have CM by $\bZ[-1]$. The Atkin-Lehner involutions $w_2$ has two fixed points (CM by $\mathbb{Z}[-1]$), $w_3$ has two fixed points (CM by $\mathbb{Z}[\frac{1+\sqrt{-3}}{2}]$), $w_6$ has two fixed points (CM by $\mathbb{Z}[\sqrt{-6}]$). For discriminants $D < -6$, we have \begin{align}\label{eq:proj1}
    \langle \cP_{-3, \cE}, \cP_{D, \cE} \rangle_{\cE} =  \frac{2}{4}\cdot \frac{2\cdot 3}{2\eta(m)}\langle \mathcal{P}_{-3, \cX}, \mathcal{P}_{D, \cX}\rangle_{\mathcal{X}}, \\\label{eq:proj2}
    \langle \cP_{-4, \cE}, \cP_{D, \cE} \rangle_{\cE} =  \frac{2}{4}\cdot \frac{2\cdot 2}{2\eta(m)}\langle \mathcal{P}_{-4, \cX}, \mathcal{P}_{D, \cX}\rangle_{\mathcal{X}}. 
\end{align}
Recall \cite{MR2414789}*{Lemma 3} that $P_D(x) \equiv x^n\bmod 2$ for some $n$ and $P_D(x) \equiv \pm x^m \bmod 3$ for some $m$. For discriminants $D$ considered in this work, we obtain a strengthened version by computing the local intersection number of $\mathcal{P}_{-3, \cX}$ or  $\mathcal{P}_{-4, \cX}$ with $\mathcal{P}_{D, \cX}$ at $2$ and $3$. 

\begin{lemma}\label{lem:avoidintersection}
    Let $D$ be a fundamental discriminant. \begin{enumerate}
        \item When $3 \nmid D$, the local intersection of $\mathcal{P}_{-3,\cE}$ and $\mathcal{P}_{D, \cE}$ at $p$ is $0$ if $-3D$ is not a square modulo $24p$. 
        \item When $2\nmid D$, the local intersection of $\mathcal{P}_{-4,\cE}$ and $\mathcal{P}_{D, \cE}$ at $p$ is $0$ if $-4D$ is not a square modulo $24p$.
    \end{enumerate}
\end{lemma}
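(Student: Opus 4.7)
The plan is to apply the intersection-number formula \cite{MR2441697}*{(3.12)} on the auxiliary moduli space $\cX = \cX_{1,6,m}$ and then transfer the vanishing to $\cE$ via the projection formulas \eqref{eq:proj1} and \eqref{eq:proj2}. Since the projection coefficients are nonzero rationals, local vanishing on $\cE$ at $p$ is equivalent to local vanishing on $\cX$ at $p$, so I may work entirely on $\cX$.

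The formula in \cite{MR2441697}*{(3.12)} expresses the local contribution to $\langle \cP_{D_1,\cX}, \cP_{D_2,\cX}\rangle_{\cX}$ at $p$ as a weighted count of superspecial QM abelian surfaces $A/\overline{\bF}_p$ admitting a pair of optimal CM embeddings $\cO_{D_1},\cO_{D_2}\hookrightarrow \End_{\mathrm{QM}}(A)$, subject to the level-$m$ compatibility built into $\cX_{1,6,m}$. Since $p\in\{2,3\}$ divides the discriminant of $\Lambda$, the QM endomorphism algebra of such an $A$ is the definite quaternion algebra $B'$ ramified exactly at $p$ and $\infty$. In case (1), a contributing pair produces trace-zero elements $\alpha,\beta$ in a maximal order of $B'$ with $\alpha^2=-3$ and with $\beta$ extracted from a generator of $\cO_D$; the Gram matrix of $(\alpha,\beta)$ under the reduced trace pairing has determinant of the shape $(-3D-s^2)/4$ for some integer $s=\trd(\alpha\bar\beta)$.

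For such a pair to exist, $-3D-s^2$ must be represented (up to the standard factor) by the reduced norm form on the orthogonal complement of $\alpha$ in the maximal order, which imposes local conditions at every prime. The integrality of the trace pairing at $2$ and $3$, coming from the maximal-order structure in the quaternion algebra of discriminant $6$, forces $8\mid -3D-s^2$ and $3\mid -3D-s^2$, while the fact that the intersection is taking place at $p$ and $B'$ is ramified there forces $p\mid -3D-s^2$. Combining these gives $s^2\equiv -3D\pmod{24p}$. Hence if $-3D$ is not a square modulo $24p$, no admissible $s$ exists, the sum in \cite{MR2441697}*{(3.12)} at $p$ is empty, and the local intersection vanishes. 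Case (2) is entirely analogous, with $\alpha^2=-1$ producing the congruence $s^2\equiv -4D\pmod{24p}$.

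The main obstacle I anticipate is pinpointing that the correct modulus is exactly $24p$ and not some proper divisor or multiple of it: this requires a careful accounting of the local conditions at $2$ and $3$ built into $\cX_{1,6,m}$ (the integrality of the Rosati-compatible pairing on the trace-zero sublattice of $\Lambda$ at the two ramified primes) and of the Gross--Keating-type divisibility at $p$ coming from the superspecial contribution. Once these are matched against the normalization in \cite{MR2441697}*{(3.12)}, the square obstruction argument above finishes the proof.
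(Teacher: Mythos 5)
Your approach coincides with the paper's: the proof given there is a one-line deduction from the intersection formula of \cite{MR2441697} together with the projection formulas \eqref{eq:proj1} and \eqref{eq:proj2}, exactly as you propose. Your additional sketch of how the formula produces the congruence $s^2\equiv -3D$ (resp.\ $-4D$) modulo $24p$ is supplementary detail that the paper delegates entirely to the cited reference, so no further comparison is needed.
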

\begin{proof}
    This follows immediately from the intersection formula in \cite{MR2441697}*{Theorem 3.2} and \eqref{eq:proj1},\eqref{eq:proj2}.
\end{proof}

\begin{lemma}\label{lem:red23}
    Let $l \geq 5$ be a prime. For discriminants $D$ of the form in the table, the following reductions hold:
    \begin{table}[h]
\centering
\begin{tabular}{|c|c|c|}
\hline
$D$ &  $P_D \bmod 2$ & $P_D \bmod 3$  \\
\hline
$-4l, \, l\equiv 13\bmod 24$          &    $ x^{h'} $       &       $\pm x^{h'}$   \\
$-l, \ l\equiv 19\bmod 24$    &       $1$      &   $\pm x^{h'}$          \\
$-3l, \, l\equiv 1\bmod 24$    &  $1$         &   $\pm1$    \\
\hline
\end{tabular}
\end{table}
\end{lemma}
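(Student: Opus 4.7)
My plan is to invoke the normal form from \cite{MR2414789}*{Lemma 3}, which says $P_D\bmod 2$ and $P_D\bmod 3$ are each a unit times a power of $x$, and to pin down the exponent in each case via the local intersection numbers of $\cP_{D,\cE}$ with the two elliptic-point divisors $\cP_{-4,\cE}$ and $\cP_{-3,\cE}$. In the $j$-coordinate these two divisors are the horizontal sections $\{j=0\}$ and $\{j=\infty\}$ of $\cE$, so by the second case of the example above, applied to $\cE\otimes \bZ_{(p)}\simeq\bP^1_{\bZ_{(p)}}$, the local intersection at $p$ vanishes exactly when no point of $E_6(D)$ reduces to the corresponding elliptic point modulo $p$. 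Since the normal form already forces every reduction at $p=2,3$ to land in $\{0,\infty\}$, killing one of the two intersection numbers pins down the exponent as either $0$ or $h'$. In each row of the target table I will kill exactly one of them using \Cref{lem:avoidintersection}.

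For $D=-4l$ with $l\equiv 13\pmod{24}$, part (2) of \Cref{lem:avoidintersection} is inapplicable since $2\mid D$, but part (1) applies and $-3D=12l\equiv 12\pmod{24p}$ for $p=2,3$; since $12$ is a nonsquare modulo both $16$ and $9$, the intersection with $\cP_{-3,\cE}$ vanishes at $2$ and $3$, forcing $n=m=h'$. For $D=-3l$ with $l\equiv 1\pmod{24}$, the mirror computation using part (2) with $-4D=12l$ runs identically and yields $n=m=0$. For $D=-l$ with $l\equiv 19\pmod{24}$ I treat the two primes separately: at $p=2$, part (2) with $-4D=4l\equiv 28\pmod{48}$ works since $28\equiv 12\pmod{16}$ is a nonsquare, which kills the intersection with $\cP_{-4,\cE}$ and gives $n=0$; at $p=3$, part (1) with $-3D=3l\equiv 57\pmod{72}$ works since $57\equiv 3\pmod 9$ is a nonsquare, which kills the intersection with $\cP_{-3,\cE}$ and gives $m=h'$. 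The unit constants $c_2\in\bF_2^\times=\{1\}$ and $c_3\in\bF_3^\times=\{\pm 1\}$ then match the table entries automatically.

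The only step that is not routine is the dictionary translating local arithmetic intersection numbers on $\cE$ into multiplicities of roots of $P_D\bmod p$ at $0$ or $\infty$. This is the second case of the example above, together with the identification of $\cP_{-4,\cE}$ and $\cP_{-3,\cE}$ with the integral extensions of the sections $\{j=0\}$ and $\{j=\infty\}$; both identifications are immediate from the coordinate table in \S\ref{sec:coord} and the fact that the unique CM points for $\bZ[\sqrt{-1}]$ and $\bZ[(1+\sqrt{-3})/2]$ extend to horizontal sections of $\cE$.
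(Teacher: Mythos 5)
Your proposal follows the paper's proof almost step for step: both arguments start from the Baba--Granath normal form $P_D\equiv x^n\pmod 2$, $P_D\equiv\pm x^m\pmod 3$, identify $\cP_{-4,\cE}$ and $\cP_{-3,\cE}$ with the sections $j=0$ and $j=\infty$, and pin down the exponent by using \Cref{lem:avoidintersection} to rule out one of the two possible reduction points; your congruence checks ($12l\equiv 12\pmod{48}$ and $\pmod{72}$, $4l\equiv 28\pmod{48}$, $3l\equiv 57\pmod{72}$, all nonsquares) coincide with the paper's. The one divergence is the entry $P_{-4l}\bmod 2$: the paper does not run the intersection argument there and instead cites \cite{MR2414789}*{Lemma 6}, whereas you apply \Cref{lem:avoidintersection}(1) at $p=2$ to the pair $(\cP_{-3,\cE},\cP_{-4l,\cE})$. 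This is formally allowed by the lemma as stated (it only requires $3\nmid D$) and produces the correct entry $x^{h'}$, but it silently invokes the intersection formula of \cite{MR2441697}*{Theorem 3.2} locally at a prime $p=2$ dividing the discriminant $D=-4l$ (and ramified in $\bQ(\sqrt{-l})$), a configuration the paper conspicuously avoids in exactly this spot; you should verify that the hypotheses of that theorem cover it. If they do not, the entry is still recovered precisely as in the paper by citing \cite{MR2414789}*{Lemma 6}, so the lemma itself is not in danger --- only this one step of your derivation needs either justification or replacement by the citation.
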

\begin{proof} 
    For $D=-4l$ where $l\equiv 13\bmod 24$ or $D = -l$ where $l\equiv 19 \bmod 24$,
    since $-3D$ is not a square modulo $72$, 
    the local intersection of $\mathcal{P}_{-3, \cE}$ and $\mathcal{P}_{D,\cE}$ at $p=3$ is $0$, then $v_{3}(j(a)) > 0$ for every root $j(a)$ of $P_{D}(x)$, so $P_{D}(x) \equiv \pm x^{h'}\pmod 3$. For $l\equiv 1\bmod 24$, since $12l$ is not a square modulo $72$, 
    the local intersection of $\mathcal{P}_{-4, \cE}$ and $\mathcal{P}_{-3l, \cE}$ at $p=3$ is $0$, then $v_{3}(j(a)) < 0$ for every root $j(a)$ of $P_{-3l}(x)$, so $P_{-3l}(x) \equiv \pm 1\pmod 3$.

    For $D = -l$ where $l\equiv 19\bmod 24$ or $D = -3l$ where $l\equiv 1\bmod 24$, since $-4D$ is not a square modulo $48$, the local intersection of $\mathcal{P}_{-4, \cE}$ and $\mathcal{P}_{D, \cE}$ at $p=2$ is $0$, then $v_{2}(j(a)) < 0$ for every root $j(a)$ of $P_{D}(x)$, so $P_{D}(x) \equiv 1\pmod 2$. The case of $P_{-4l} \bmod 2$ where $l\equiv 13\bmod 24$ is proved in \cite{MR2414789}*{Lemma 6}.
\end{proof}

\section{Proof of main theorem}

Let $S$ be a finite set of primes containing primes above $2,3$ and all primes occuring in $j_0$ and $27j_0 + 16$. 
    Write $P = |\Nm_{L/\bQ}(P_{D}(j_0))|, Q = |\Nm_{L/\bQ}(27j_0+16)|, s = \sgn (\Nm_{L/\bQ}(P_{D}(j_0))), s' = \sgn (\Nm_{L/\bQ}(27j_0+16))$, and $N = d^{h'}P$ where $h' = \deg P_D$. 
    Let $j_1<j_2 < \cdots < j_r$ be the real conjugates of $j_0$. 

    In each case, we are going to find some discriminant $D$ with a rational prime $p\notin \Nm_{L/\bQ}(S)$ such that $v_p(P_{D}(j_0)) > 0$ and $\left(\frac{D}{p}\right) \neq 1$, then as in \cite{MR2414789}, there is a prime $\fp \notin S$ above $p$, such that the Jacobian of $C$ has supersingular, and hence superspecial reduction. 
    \begin{enumerate}
        \item Choose a prime $l$ satisfying the conditions:  
        \begin{itemize}
            \item $l\equiv 13 \pmod{24}$,
            \item $\left(\frac{-l}{q}\right) = 1$ for every prime $q\in \Nm_{L/\bQ}(S) \backslash \{2,3\}$, 
            \item $P_{-4l}(x)$ has a real root $j(a_i)$ in a subinterval of $(-\frac{16}{27},0)$ to be specified later.
        \end{itemize}
         Let $D = -4l$. By \cite{MR2414789}*{Lemma 6} and \Cref{lem:red23}, \begin{align*}
            P_{-4l}(x) &\equiv x^{h'} \pmod{4}, \\
            P_{-4l}(x) &\equiv \pm x^{h'} \pmod{3}, \\
            P_{-4l}(x) &\equiv (27x+16)S(x)^2 \pmod{l} \text{ for some }S(x) \in\bZ[x].
        \end{align*}
        Since $v_{\mathfrak{p}}(j_0) \leq 0$ for all $\mathfrak{p}$ above $2,3$, we can choose $d$ such that
        $d\prod_{\sigma\in T} \sigma(j_0)$ is integral  for any $T\subset \Hom(L, \overline{\bQ})$ and any prime dividing $d$ lies in $\Nm_{L/\bQ}(S)$, then
        $N, dQ\in \bN$ and $(N, 6) = 1,\, (dQ,2) = 1$. 
        Suppose $l\nmid N$, then \begin{align*}
        \left(\frac{-4l}{N}\right) &= \left(\frac{-1}{N}\right)\left(\frac{l}{N}\right)= \left(\frac{-1}{N}\right)\left(\frac{N}{l}\right) \\
        &= \left(\frac{-1}{N}\right)\left(\frac{dQ}{l}\right) = \left(\frac{-1}{N}\right)\left(\frac{l}{dQ}\right)\\
        &= \left(\frac{-1}{N}\right)\left(\frac{-1}{dQ}\right) \left(\frac{-l}{dQ}\right) \\ &= \left(\frac{-1}{s(d\Nm_{L/\bQ} j_0)^{h'}}\right)\left(\frac{-1}{s'3^{[L:\bQ]}d\Nm_{L/\bQ}(j_0)}\right)\left(\frac{-l}{3^{v_3(dQ)}}\right) \\
        &= ss'(-1)^{[L:\bQ]+v_3(dQ)}.\end{align*} 
        If $[L:\bQ]+v_3(dQ)$ is odd, let \[j(a_1)\in\left(-\frac{16}{27}, -\frac{16}{27} + \min_{1\leq i \leq r} \left|j_i + \frac{16}{27}\right|\right)\]
        so that $(27j_i + 16) P_{-4l}(j_i) > 0$ for each $1\leq i\leq r$. 
        If $[L:\bQ]+v_3(dQ)$ is even, by assumption let $j_t$ be the minimal real conjugate of $j_0$ in $( -\frac{16}{27} , 0)$, and \[j(a_1)\in\left(j_t, j_{t+1}\right)\] so that $(27j_i + 16) P_{-4l}(j_i) > 0$ for each $1\leq i\leq r, i\neq t$ and $(27j_t + 16) P_{-4l}(j_t) < 0$. In either case we have \[\left(\frac{-4l}{N}\right) = -1\] and there is a prime divisor $p$ of $N$ such that  \[\left(\frac{-4l}{p}\right) = -1.\] The conditions on $l$ and $(N,6) = 1$ imply that $p\notin \Nm_{L/\bQ}(S)$ and $v_p(\Nm_{L/\bQ}(P_D(j_0))) = v_p(N) > 0$. 
        If $l | N$, then since $l\notin \Nm_{L/\bQ}(S)$ by construction, we can choose $p = l$. 
        \item Choose a prime $l$ satifying the conditions:
        \begin{itemize}
            \item $l\equiv 19\pmod{24}$,
            \item $\left(\frac{q}{l}\right)=\left(\frac{-l}{q}\right) = 1$ for every prime $q\in \Nm_{L/\bQ}(S) \backslash \{2,3\}$, 
            \item $P_{-l}(x)$ has a real root $j(a_1)$ in a subinterval of $(0, \infty)$ to be specified later.
        \end{itemize} Let $D = -l$. By \Cref{lem:red23}, \Cref{lem:pair}, and \cite{MR2414789}*{Lemma 4},
        \begin{align*}
        P_{-l}(x) &\equiv 1 \pmod{2}, \\
        P_{-l}(x) &\equiv \pm x^{h'} \pmod{3}, \\
        P_{-l}(x) &\equiv 3(27x+16)S(x)^2 \pmod{l} \text{ for some }S(x) \in\bZ[x].
        \end{align*}
    Since  $v_{\mathfrak{p}}(j_0) \geq 0$ for $\mathfrak{p}$ above $2$ and $v_{\mathfrak{p}}(j_0) \leq 0$ for $\mathfrak{p}$ above $3$, we can choose $d$ such that
        $d\prod_{\sigma\in T} \sigma(j_0)$ is integral  for any $T\subset \Hom(L, \overline{\bQ})$ and any prime dividing $d$ lies in $\Nm_{L/\bQ}(S)\backslash\{2\}$, then
        $N, dQ\in \bN$ and $(N, 6) = 1$ . 
    Suppose $l\nmid N$, then
    \begin{align*}
        \left(\frac{-l}{N}\right) &= \left(\frac{N}{l}\right) \\
        &= \left(\frac{s(d\Nm_{L/\bQ}(3(27j_0+16)))}{l}\right)\\
        &= \left(\frac{ss'3^{[L:\bQ]}dQ}{l}\right) \\ 
        &= ss'(-1)^{[L:\bQ]+v_3(dQ)+v_2(Q)}
    \end{align*}
    Let $j_t$ be the minimal real conjugate of $j_0$ in $(0, \infty)$ and $n$ ($0\leq n < r$) be the number of real conjugates of $j_0$ in $(-\frac{16}{27}, 0)$. If $[L:\bQ]+v_3(dQ)+v_2(Q)$ is odd, let \[\twocase{j(a_1)\in}{(j_t, j_{t+1})}{$n$ is odd}{(0, j_t)}{$n$ is even}\] so that $\Nm_{L/\bQ}(P_{-l}(j_0))\Nm_{L/\bQ}(27j_0+16)>0$. If $[L:\bQ]+v_3(dQ)+v_2(Q)$ is even, let \[\twocase{j(a_1)\in}{(j_t, j_{t+1})}{$n$ is even}{(0, j_t)}{$n$ is odd}\] so that $\Nm_{L/\bQ}(P_{-l}(j_0))\Nm_{L/\bQ}(27j_0+16)<0$. In either case we have \[\left(\frac{-l}{N}\right) = -1\] and there is a prime divisor $p$ of $N$ such that  \[\left(\frac{-l}{p}\right) = -1.\] The conditions on $l$ and $(N,6) = 1$ imply that $p\notin \Nm_{L/\bQ}(S)$ and $v_p(\Nm_{L/\bQ}(P_D(j_0))) = v_p(N) > 0$. 
        If $l | N$, then since $l\notin \Nm_{L/\bQ}(S)$ by construction, we can choose $p = l$. 
        
    \item Choose a prime $l$ satisfying the conditions:\begin{itemize}
        \item $l\equiv 1\pmod {24}$, 
        \item $\left(\frac{-3l}{q}\right) = 1$ for every prime $q\in \Nm_{L/\bQ}(S) \backslash \{2,3\}$, 
        \item the number of real conjugates of $j_0$ between the two real roots of $P_{-3l}(x)$ is odd, so that \[\Nm_{L/\bQ}(P_{-3l}(j_0))<0.\]
    \end{itemize} 
    Let $D = -3l$. By \cite{MR2414789}*{Lemma 4}, the only possible odd prime power in $b_{h'}$ is $3$ where $3$ is a square modulo $l$. Then by \Cref{lem:red23} and \Cref{lem:pair}, 
    \begin{align*}
        P_{-3l}(x) &\equiv 1 \pmod{2}, \\
        P_{-3l}(x) &\equiv \pm 1 \pmod{3}, \\
        P_{-3l}(x) &\equiv S(x)^2 \pmod{l} \text{ for some }S(x) \in\bZ[x].
        \end{align*}
    Since $v_{\fp}(j_0) \geq 0$ for $\fp$ above $2,3$, we can choose $d$ such that $d\prod_{\sigma\in T} \sigma(j_0)$ is integral  for any $T\subset \Hom(L, \overline{\bQ})$ and any prime dividing $d$ lies in $\Nm_{L/\bQ}(S)\backslash\{2,3\}$, then
        $N\in \bN$. Since $P_{-3l}(x)$ is constant modulo $6$ and $[L:\bQ]$ is even, we have \[d^{h'}\Nm_{L/\bQ}(P_{-3l}(j_0))\equiv 1\pmod{6}.\]
        Suppose $l\nmid P$, then \[\left(\frac{-3l}{N}\right) = \left(\frac{N}{3l}\right) = \left(\frac{-1}{3l}\right) \left(\frac{d^{h'}\Nm_{L/\bQ}(P_{-3l}(j_0))}{3l}\right)= \left(\frac{-1}{3l}\right) = -1\] since $P_{-3l}(x)$ is square modulo $l$. 
        There is a prime divisor $p$ of $N$ such that  \[\left(\frac{-3l}{p}\right) = -1.\] The conditions on $l$ and $(N,6) = 1$ imply that $p\notin \Nm_{L/\bQ}(S)$ and $v_p(\Nm_{L/\bQ}(P_D(j_0))) = v_p(N) > 0$. 
        If $l | N$, then since $l\notin \Nm_{L/\bQ}(S)$ by construction, we can choose $p = l$. 
    \end{enumerate}

\begin{rem}    \label{rem:conclude} One can try to further weaken the local conditions by considering Heegner cycles with different forms of discriminant $D$. 
For primes $p$ dividing $D$, the unpaired roots of $P_D(x)$ modulo $p$ can be predicted by checking whether a maximal order in a definite quaternion algebra ramified at $2,3,p$ contains two anticommuting CM orders of some given discriminants, as computed in \cite{MR2704678}*{3.4.2}. One can then impose conditions on the number of prime divisors of $D$ and the congruence class modulo $24$ of primes dividing $D$, so that $P_D(x)$ has no unpaired roots or $P_D(x)$ has a single unpaired root from the same elliptic point modulo each $p\nmid D$. In addition, one imposes a congruence condition on $D$ so that $\cP_{D, \cE}$ avoids intersection with one of $\cP_{-3, \cE}$ and $\cP_{-4, \cE}$ by \Cref{lem:avoidintersection}. 
The local conditions on $j_0$ are determined so that it avoids intersection with the these $\cP_D$ at $p = 2,3$, and some real conjugate of $j_0$ and some real root of  $P_D$ lie in the subinterval  corresponding to one geodesic. For $D$ not of the form $-Np$ with $N = 1, 3, 4, 8, 12, 24$, it is possible that $P_D(x)$ has multiple real roots in each subinterval, but we still expect an equidistribution result. 
\end{rem}
\bibliographystyle{amsalpha}
\bibliography{references}

@article {MR2423455,
    AUTHOR = {Baba, Srinath and Granath, H\aa kan},
     TITLE = {Genus 2 curves with quaternionic multiplication},
   JOURNAL = {Canad. J. Math.},
  FJOURNAL = {Canadian Journal of Mathematics. Journal Canadien de
              Math\'ematiques},
    VOLUME = {60},
      YEAR = {2008},
    NUMBER = {4},
     PAGES = {734--757},
      ISSN = {0008-414X,1496-4279},
   MRCLASS = {11G18 (14G35)},
  MRNUMBER = {2423455},
MRREVIEWER = {Marc-Hubert\ Nicole},
       DOI = {10.4153/CJM-2008-033-7},
       URL = {https://doi.org/10.4153/CJM-2008-033-7},
}

@article {MR2414789,
    AUTHOR = {Baba, Srinath and Granath, H\aa kan},
     TITLE = {Primes of superspecial reduction for {QM} abelian surfaces},
   JOURNAL = {Bull. Lond. Math. Soc.},
  FJOURNAL = {Bulletin of the London Mathematical Society},
    VOLUME = {40},
      YEAR = {2008},
    NUMBER = {2},
     PAGES = {311--318},
      ISSN = {0024-6093,1469-2120},
   MRCLASS = {11G18 (11G15 11G25 14G35)},
  MRNUMBER = {2414789},
MRREVIEWER = {Adrian\ Vasiu},
       DOI = {10.1112/blms/bdn008},
       URL = {https://doi.org/10.1112/blms/bdn008},
}

@incollection {MR1141456,
    AUTHOR = {Boutot, J.-F. and Carayol, H.},
     TITLE = {Uniformisation {$p$}-adique des courbes de {S}himura: les th\'eor\`emes de \v Cerednik et de {D}rinfeld},
      NOTE = {Courbes modulaires et courbes de Shimura (Orsay, 1987/1988)},
   JOURNAL = {Ast\'erisque},
  FJOURNAL = {Ast\'erisque},
    NUMBER = {196-197},
      YEAR = {1991},
     PAGES = {7, 45--158},
      ISSN = {0303-1179,2492-5926},
   MRCLASS = {11G18 (14G20 14G35)},
  MRNUMBER = {1141456},
MRREVIEWER = {Min\ Ho\ Lee},
}

@phdthesis{Clark2003,
AUTHOR  = {Clark,Pete L.},
year={2003},
title={Rational points on Atkin -Lehner quotients of Shimura curves},
journal={ProQuest Dissertations and Theses},
pages={184},
note={Copyright - Database copyright ProQuest LLC; ProQuest does not claim copyright in the individual underlying works; Last updated - 2023-03-04},
abstract={We study three families of Atkin-Lehner quotients of quaternionic Shimura curves: XD+, special characters omitted](N), and special characters omitted](N), which serve as moduli spaces of abelian surfaces with potential quaternionic multiplication (PQM) and level N structure. The arithmetic geometry of these curves is similar to, but even richer than, that of the classical modular curves. Two important differences are the existence of a nontrivial obstruction to an abelian surface being defined over its field of moduli and the lack of cusps, due to which there may fail to be any points rational over a given field. We study the existence of points on these curves rational over both local and global fields, and consider applications to the existence of PQM surfaces over special characters omitted].},
keywords={Pure sciences; Atkin-Lehner quotients; Hasse principle; Rational points; Shimura curves; Mathematics; 0405:Mathematics},
isbn={978-0-496-39246-9},
language={English},
url={https://www.proquest.com/dissertations-theses/rational-points-on-atkin-lehner-quotients-shimura/docview/305334169/se-2},
}

@book {MR963648,
    AUTHOR = {Conner, P. E. and Hurrelbrink, J.},
     TITLE = {Class number parity},
    SERIES = {Series in Pure Mathematics},
    VOLUME = {8},
 PUBLISHER = {World Scientific Publishing Co., Singapore},
      YEAR = {1988},
     PAGES = {xii+234},
      ISBN = {9971-50-669-6},
   MRCLASS = {11R29 (11R11 11R27 11R34)},
  MRNUMBER = {963648},
MRREVIEWER = {H.\ Yokoi},
       DOI = {10.1142/0663},
       URL = {https://doi.org/10.1142/0663},
}

@article {MR903384,
    AUTHOR = {Elkies, Noam D.},
     TITLE = {The existence of infinitely many supersingular primes for
              every elliptic curve over {${\bf Q}$}},
   JOURNAL = {Invent. Math.},
  FJOURNAL = {Inventiones Mathematicae},
    VOLUME = {89},
      YEAR = {1987},
    NUMBER = {3},
     PAGES = {561--567},
      ISSN = {0020-9910,1432-1297},
   MRCLASS = {11G05 (14G25)},
  MRNUMBER = {903384},
MRREVIEWER = {David\ Grant},
       DOI = {10.1007/BF01388985},
       URL = {https://doi.org/10.1007/BF01388985},
}

@article {MR1030140,
    AUTHOR = {Elkies, Noam D.},
     TITLE = {Supersingular primes for elliptic curves over real number
              fields},
   JOURNAL = {Compositio Math.},
  FJOURNAL = {Compositio Mathematica},
    VOLUME = {72},
      YEAR = {1989},
    NUMBER = {2},
     PAGES = {165--172},
      ISSN = {0010-437X,1570-5846},
   MRCLASS = {11G05},
  MRNUMBER = {1030140},
MRREVIEWER = {Joseph\ H.\ Silverman},
       URL = {http://www.numdam.org/item?id=CM_1989__72_2_165_0},
}

@incollection {MR1726059,
    AUTHOR = {Elkies, Noam D.},
     TITLE = {Shimura curve computations},
 BOOKTITLE = {Algorithmic number theory ({P}ortland, {OR}, 1998)},
    SERIES = {Lecture Notes in Comput. Sci.},
    VOLUME = {1423},
     PAGES = {1--47},
 PUBLISHER = {Springer, Berlin},
      YEAR = {1998},
      ISBN = {3-540-64657-4},
   MRCLASS = {11G18 (11Y35)},
  MRNUMBER = {1726059},
MRREVIEWER = {Chandrashekhar\ Khare},
       DOI = {10.1007/BFb0054850},
       URL = {https://doi.org/10.1007/BFb0054850},
}

@book {MR2704678,
    AUTHOR = {Jao, David Yen},
     TITLE = {Supersingular primes for rational points on modular curves},
      NOTE = {Thesis (Ph.D.)--Harvard University},
 PUBLISHER = {ProQuest LLC, Ann Arbor, MI},
      YEAR = {2003},
     PAGES = {88},
      ISBN = {978-0496-39298-8},
   MRCLASS = {99-05},
  MRNUMBER = {2704678},
       URL =
              {http://gateway.proquest.com/openurl?url_ver=Z39.88-2004&rft_val_fmt=info:ofi/fmt:kev:mtx:dissertation&res_dat=xri:pqdiss&rft_dat=xri:pqdiss:3091589},
}

@article {MR2441697,
    AUTHOR = {Keating, Kevin and Roberts, David P.},
     TITLE = {Intersection numbers of {H}eegner divisors on {S}himura
              curves},
   JOURNAL = {Pure Appl. Math. Q.},
  FJOURNAL = {Pure and Applied Mathematics Quarterly},
    VOLUME = {4},
      YEAR = {2008},
    NUMBER = {4},
     PAGES = {1165--1204},
      ISSN = {1558-8599,1558-8602},
   MRCLASS = {11G18 (14G35)},
  MRNUMBER = {2441697},
MRREVIEWER = {Min\ Ho\ Lee},
       DOI = {10.4310/PAMQ.2008.v4.n4.a8},
       URL = {https://doi.org/10.4310/PAMQ.2008.v4.n4.a8},
}

@book {MR1282723,
    AUTHOR = {Lang, Serge},
     TITLE = {Algebraic number theory},
    SERIES = {Graduate Texts in Mathematics},
    VOLUME = {110},
   EDITION = {Second},
 PUBLISHER = {Springer-Verlag, New York},
      YEAR = {1994},
     PAGES = {xiv+357},
      ISBN = {0-387-94225-4},
   MRCLASS = {11Rxx (11-01 11-02)},
  MRNUMBER = {1282723},
MRREVIEWER = {M.\ Ram\ Murty},
       DOI = {10.1007/978-1-4612-0853-2},
       URL = {https://doi.org/10.1007/978-1-4612-0853-2},
}

@article {MR523989,
    AUTHOR = {Kurihara, Akira},
     TITLE = {On some examples of equations defining {S}himura curves and
              the {M}umford uniformization},
   JOURNAL = {J. Fac. Sci. Univ. Tokyo Sect. IA Math.},
  FJOURNAL = {Journal of the Faculty of Science. University of Tokyo.
              Section IA. Mathematics},
    VOLUME = {25},
      YEAR = {1979},
    NUMBER = {3},
     PAGES = {277--300},
      ISSN = {0040-8980},
   MRCLASS = {14H99 (10D25 14G25)},
  MRNUMBER = {523989},
MRREVIEWER = {James\ Milne},
}

@article {MR352105,
    AUTHOR = {Mumford, David},
     TITLE = {An analytic construction of degenerating curves over complete
              local rings},
   JOURNAL = {Compositio Math.},
  FJOURNAL = {Compositio Mathematica},
    VOLUME = {24},
      YEAR = {1972},
     PAGES = {129--174},
      ISSN = {0010-437X,1570-5846},
   MRCLASS = {14H15},
  MRNUMBER = {352105},
MRREVIEWER = {I.\ Dolgachev},
}

@incollection {MR1097624,
    AUTHOR = {Ribet, Kenneth A.},
     TITLE = {Bimodules and abelian surfaces},
 BOOKTITLE = {Algebraic number theory},
    SERIES = {Adv. Stud. Pure Math.},
    VOLUME = {17},
     PAGES = {359--407},
 PUBLISHER = {Academic Press, Boston, MA},
      YEAR = {1989},
      ISBN = {0-12-177370-1},
   MRCLASS = {11G18 (11G10)},
  MRNUMBER = {1097624},
MRREVIEWER = {Min\ Ho\ Lee},
       DOI = {10.2969/aspm/01710359},
       URL = {https://doi.org/10.2969/aspm/01710359},
}

@article {MR2034317,
    AUTHOR = {Rotger, Victor},
     TITLE = {Modular {S}himura varieties and forgetful maps},
   JOURNAL = {Trans. Amer. Math. Soc.},
  FJOURNAL = {Transactions of the American Mathematical Society},
    VOLUME = {356},
      YEAR = {2004},
    NUMBER = {4},
     PAGES = {1535--1550},
      ISSN = {0002-9947,1088-6850},
   MRCLASS = {11G18 (14G35)},
  MRNUMBER = {2034317},
MRREVIEWER = {Ulrich\ G\"ortz},
       DOI = {10.1090/S0002-9947-03-03408-1},
       URL = {https://doi.org/10.1090/S0002-9947-03-03408-1},
}

@book {MR2705896,
    AUTHOR = {Sadykov, Marat},
     TITLE = {Two results in the arithmetic of {S}himura curves},
      NOTE = {Thesis (Ph.D.)--Columbia University},
 PUBLISHER = {ProQuest LLC, Ann Arbor, MI},
      YEAR = {2004},
     PAGES = {61},
      ISBN = {978-0496-76283-5},
   MRCLASS = {99-05},
  MRNUMBER = {2705896},
       URL =
              {http://gateway.proquest.com/openurl?url_ver=Z39.88-2004&rft_val_fmt=info:ofi/fmt:kev:mtx:dissertation&res_dat=xri:pqdiss&rft_dat=xri:pqdiss:3129023},
}

@misc{li2025infinitelyprimesbasicreduction,
      title={Infinitely many primes of basic reduction for some abelian fourfolds}, 
      author={Wanlin Li and Elena Mantovan and Rachel Pries and Yunqing Tang},
      year={2025},
      eprint={2511.05322},
      archivePrefix={arXiv},
      primaryClass={math.NT},
      url={https://arxiv.org/abs/2511.05322}, 
}

@misc{chen2025infinitelysupersingularprimesmumfords,
      title={Infinitely many supersingular primes for some Mumford's abelian fourfolds}, 
      author={Fangu Chen},
      year={2025},
      eprint={2511.06654},
      archivePrefix={arXiv},
      primaryClass={math.NT},
      url={https://arxiv.org/abs/2511.06654}, 
}

\end{document}